\newtheorem{thm}{Theorem}[section]
\newtheorem{lem}[thm]{Lemma}
\newtheorem{prop}[thm]{Proposition}
\theoremstyle{define}
\newtheorem{cor}[thm]{Corollary}
\numberwithin{equation}{section}
\begin{document}

\title{On a property of plane curves}

\author{Mohammad Javaheri\\
Department of Mathematics\\
 Trinity College\\ Hartford, CT 06106\\
\\
Email: \emph{Mohammad.Javaheri@trincoll.edu} }

\maketitle


\begin{abstract}
Let $\gamma: [0,1] \rightarrow [0,1]^2$ be a continuous curve such that $\gamma(0)=(0,0)$, $\gamma(1)=(1,1)$, and $\gamma(t) \in (0,1)^2$ for all $t\in (0,1)$. We prove that, for each $n \in \mathbb{N}$, there exists a sequence of points $A_i$, $0\leq i \leq n+1$, on $\gamma$ such that $A_0=(0,0)$, $A_{n+1}=(1,1)$, and the sequences $\pi_1(\overrightarrow{A_iA_{i+1}})$ and $\pi_2(\overrightarrow{A_iA_{i+1}})$, $0\leq i \leq n$, are positive and the same up to order, where $\pi_1,\pi_2$ are projections on the axes.
\end{abstract}

\section{Introduction}

Let $\gamma:[0,1] \rightarrow [0,1]^2$ be a continuous curve such that $\gamma(0)=(0,0)=O$ and $\gamma(1)=(1,1)=D$. Then $\gamma$ needs to intersect, say at a point $A$, with the diagonal segment connecting the points $(0,1)$ and $(1,0)$. The vectors $\overrightarrow{OA}$ and $\overrightarrow{AD}$ relate to each other in the following way:
\begin{equation}\pi_1(\overrightarrow{OA})=\pi_2 (\overrightarrow{AD})~,~\pi_2(\overrightarrow{OA})=\pi_1 (\overrightarrow{AD})~,\end{equation}
where $\pi_1$ and $\pi_2$ are the projections, respectively, on the $x$-axis and the $y$-axis. We are interested in generalizing this geometric observation. We shall prove the following theorem in \S3.

\begin{thm}\label{main}
Let $\gamma: [0,1] \rightarrow [0,1]^2$ be a continuous curve such that $\gamma(0)=(0,0)$, $\gamma(1)=(1,1)$, and $\gamma(t) \in (0,1)^2$ for $t\in (0,1)$. Then for each $n\in \mathbb{N}$, there exists a sequence of points $A_i$ on the curve $\gamma$, $i=0,\ldots, n+1$, such that $A_0=(0,0)$, $A_{n+1}=(1,1)$, and the two sequences
\begin{equation}\label{seq1}
\pi_1 (\overrightarrow{A_iA_{i+1}})>0~,~i=0,\ldots, n~,\end{equation}
and 
\begin{equation}\label{seq2}
\pi_2 (\overrightarrow{A_iA_{i+1}})>0~,~i=0,\ldots, n~,\end{equation}
are the same after a rearrangement. 
\end{thm}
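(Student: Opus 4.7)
The plan is to realize the desired multiset equality via the \emph{reversal bijection}: to find $A_i = \gamma(t_i)$ satisfying $\pi_1(\overrightarrow{A_iA_{i+1}}) = \pi_2(\overrightarrow{A_{n-i}A_{n-i+1}})$ for $i = 0, \ldots, n$. Summing both sides from $i=0$ to $k-1$ and telescoping, this reversal identity is equivalent to the $n$ conditions
\begin{equation}\label{eq:plan-conditions}
x(t_j) + y(t_{n+1-j}) = 1, \qquad j = 1, \ldots, n,
\end{equation}
on unknowns $0 = t_0 < t_1 < \cdots < t_n < t_{n+1} = 1$. Once \eqref{eq:plan-conditions} holds with strictly increasing $y$-coordinates at the selected points, the relation $u_i = v_{n-i}$ (where $u_i, v_i$ are the $x$- and $y$-increments) yields multiset equality and positivity of all increments in one stroke.

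The base case $n=1$ is the author's observation from the introduction: the single equation $x(t_1) + y(t_1) = 1$ is solved by IVT applied to $f(t) = x(t) + y(t)$, which is continuous with $f(0)=0$ and $f(1)=2$. The interior hypothesis $\gamma((0,1)) \subset (0,1)^2$ then forces $A_1 \in (0,1)^2$ and hence positivity of both increment vectors.

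For general $n \ge 2$, I would cast \eqref{eq:plan-conditions} as a zero-finding problem for the continuous map
\[
F\colon \overline{\mathcal{T}_n} \longrightarrow \mathbb{R}^n, \qquad F_j(t_1, \ldots, t_n) = x(t_j) + y(t_{n+1-j}) - 1,
\]
on the closed parameter simplex $\overline{\mathcal{T}_n} = \{0 \le t_1 \le \cdots \le t_n \le 1\}$. The values $F(0,\ldots,0) = (-1,\ldots,-1)$ and $F(1,\ldots,1) = (1,\ldots,1)$ at opposite vertices suggest a Miranda- or Brouwer-degree-style argument for existence of an interior zero. As a sanity check, for a strictly monotone $\gamma$ (where $y = \varphi(x)$ is a function), \eqref{eq:plan-conditions} decouples into the cascade $t_{n+1-j} = G(t_j)$ with $G = \varphi^{-1} \circ (1 - \cdot)$ a continuous decreasing self-map of $[0,1]$; explicit solutions are then assembled from the unique fixed point of $G$ (given by IVT) together with $2$-cycles of $G \circ G$, parameterized by free choices of $t_1, \ldots, t_{\lfloor n/2 \rfloor}$ in $(0, \mathrm{fix}(G))$.

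The main obstacle is twofold. First, one must separate genuine interior zeros of $F$ from parasitic boundary ones: already for $n = 2$, the corner $(t_1,t_2) = (0,1)$ of $\overline{\mathcal{T}_2}$ always yields $F = 0$, but corresponds to the degenerate ``configuration'' $(O,O,D,D)$ rather than a valid sequence. Second, for non-monotone $\gamma$ the same system \eqref{eq:plan-conditions} may admit zeros at which some $v_i \le 0$, so one must isolate a solution whose $y$-coordinates at the selected parameters are strictly increasing. A careful degree computation on $F$, possibly combined with a continuous deformation from a monotone model curve, is what I expect to be the technical heart of the argument.
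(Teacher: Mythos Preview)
Your reversal ansatz $u_i = v_{n-i}$ is not always realizable, so the program cannot succeed as stated. Take $\gamma(t) = (t, t^2)$ and $n = 2$. Writing $A_1 = (a, a^2)$, $A_2 = (b, b^2)$ with $0 < a < b < 1$, the middle relation $u_1 = v_1$ reads $b - a = b^2 - a^2$, forcing $a + b = 1$; then $u_0 = v_2$ becomes $a = 1 - b^2 = 1 - (1-a)^2 = 2a - a^2$, whence $a \in \{0,1\}$. Thus every zero of your map $F$ for this curve lies on the boundary of the parameter simplex, the Brouwer degree is zero, and no homotopy from a monotone model will manufacture an interior solution. (By contrast the cyclic shift $v_i = u_{i-1 \bmod 3}$ reduces here to $a^4 - 3a^2 - a + 1 = 0$, which has a root in $(0,1)$.)

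The mistake is already visible in your monotone sanity check. The system $x(t_j)+y(t_{n+1-j})=1$ does not ``decouple into a cascade'' with free parameters: the equations come in symmetric pairs $t_{n+1-j} = G(t_j)$ and $t_j = G(t_{n+1-j})$, so each $t_j$ is forced to lie in the fixed-point set of $G \circ G$. For $\varphi(x) = x^2$ one has $G(x) = \sqrt{1-x}$ and $\mathrm{Fix}(G \circ G) = \{0,\ (\sqrt 5 - 1)/2,\ 1\}$; the only genuine $2$-cycle of $G$ is the boundary pair $\{0,1\}$, so for even $n$ the reversal system has no admissible interior solution at all.

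The paper instead targets the cyclic shift $\pi_2(\overrightarrow{A_iA_{i+1}}) = \pi_1(\overrightarrow{A_{i-1}A_i})$, and its existence proof is not a degree computation but an inductive construction of auxiliary ``partitioning functions'' via a strengthened Mountain Climbers' theorem, followed by a limiting argument to drop regularity hypotheses on $\gamma$.
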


\noindent \textbf{The graph case}. To motivate the problem better, we first prove Theorem \ref{main} when $\gamma$ is the graph of a function $f:[0,1] \rightarrow \mathbb{R}$ (see Corollary \ref{func} below). First, we need a lemma.
\begin{lem} \label{first}
Let $f:[0,1] \rightarrow \mathbb{R}$ be a continuous function such that $f(0)=0$, $f(1)=1$, and $f(x)\leq x$ for all $x\in [0,1]$. Then for any $n \in \mathbb{N}$ there exists an increasing sequence $x_i \in (0,1),~1\leq i \leq n$, such that  
\begin{equation}\label{eqs}
f(x_{i+1})-f(x_i)=x_i-x_{i-1}~,~0\leq i \leq n~.
\end{equation}
Here $x_{-1}=x_n-1$, $x_0=0$, and $x_{n+1}=1$. 
\end{lem}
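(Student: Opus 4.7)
The plan is to recast the system \eqref{eqs} as the vanishing of a continuous vector field on the standard $n$-simplex and to detect a zero by a homotopy/degree argument. Let $\Delta \subset \mathbb{R}^{n+1}$ be the closed $n$-simplex $\{\mathbf{a}=(a_0,\ldots,a_n) : a_i \ge 0,\ \sum_i a_i=1\}$, and identify each $\mathbf{a}\in\Delta$ with the partition $x_0 = 0$, $x_i = a_0 + \cdots + a_{i-1}$ for $1\le i\le n$, $x_{n+1}=1$. Set $\Psi(\mathbf{a})_i := f(x_{i+1})-f(x_i)$, let $\sigma(\mathbf{a})_i := a_{(i-1)\bmod(n+1)}$ denote the cyclic shift, and put $\Phi := \Psi - \sigma$. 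Since both $\Psi(\mathbf{a})$ and $\sigma(\mathbf{a})$ have coordinate sum $1$, $\Phi$ takes values in the hyperplane $H := \{v\in\mathbb{R}^{n+1} : \sum_i v_i=0\}$. The system \eqref{eqs} becomes exactly $\Phi(\mathbf{a})=0$ (noting that $\sigma(\mathbf{a})_0 = a_n = 1-x_n = x_0-x_{-1}$), and a zero in $\mathrm{int}(\Delta)$ produces the strictly increasing sequence required by the lemma.

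First I would rule out zeros on $\partial\Delta$: if some $a_i=0$, then $x_i=x_{i+1}$ and hence $\Psi(\mathbf{a})_i = 0$; demanding $\Phi(\mathbf{a})_i = 0$ forces $a_{(i-1)\bmod(n+1)}=0$, and iterating this implication cyclically yields $a_j=0$ for every $j$, contradicting $\sum_j a_j=1$. Only continuity of $f$ is used here (the hypothesis $f\le x$ is not yet needed), so $\Phi|_{\partial\Delta}:\partial\Delta\to H\setminus\{0\}$ has a well-defined Brouwer degree. To compute this degree, I would use the affine homotopy $f_t := (1-t)\,\mathrm{id}+t\,f$ for $t\in[0,1]$: each $f_t$ is continuous and still satisfies $f_t(0)=0$, $f_t(1)=1$, $f_t(x)\le x$, so the boundary-exclusion argument applies uniformly and the degree of $\Phi_t|_{\partial\Delta}$ is invariant in $t$. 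At $t=0$, $\Psi_0(\mathbf{a}) = \mathbf{a}$ and $\Phi_0 = I-\sigma$ is a linear map whose kernel on $\mathbb{R}^{n+1}$ is the diagonal $\mathbb{R}(1,\ldots,1)$; its restriction to the affine hyperplane $\{\sum a_i = 1\}$ is therefore an affine isomorphism onto $H$, with unique zero at the barycenter $(\tfrac{1}{n+1},\ldots,\tfrac{1}{n+1})$. Consequently $\Phi_0$, and hence $\Phi_1 = \Phi$, has degree $\pm 1$, producing a zero of $\Phi$ in $\Delta$ which by the boundary analysis lies in $\mathrm{int}(\Delta)$.

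The main technical obstacle is the degree computation at $t=0$: one must verify that $(I-\sigma)$ restricted to the $n$-dimensional affine simplex is an isomorphism onto $H$ (the kernel $\mathbb{R}(1,\ldots,1)$ meets $\{\sum v_i=0\}$ only at the origin), so that the induced map $\partial\Delta\to H\setminus\{0\}$ wraps exactly once around $0$. Granted this, the remaining ingredients — continuity of $\Phi$, homotopy invariance of degree, and the cyclic argument excluding boundary zeros — are routine, and the desired $x_1<\cdots<x_n$ are read off from the interior zero $\mathbf{a}^*$ as $x_i = \sum_{j<i} a_j^*$.
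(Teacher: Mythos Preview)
Your degree-theoretic argument is correct and takes a genuinely different route from the paper. The paper proceeds constructively: setting $g_0(x)=x$ and $g_{i+1}(x)=x-1+f(g_i(x))$, it uses the Intermediate Value Theorem to locate the largest root $a_i$ of each $g_i$ on $[a_{i-1},1]$, and then reads off $x_i=g_{n-i}(a_n)$; the hypothesis $f(x)\le x$ is invoked to keep the iterates $g_i$ inside $[0,1]$ (so that $f\circ g_i$ is defined) and again at the end to verify that the resulting $x_i$ are strictly increasing. Your approach instead packages the whole system as a single map $\Phi=\Psi-\sigma:\Delta\to H$, rules out boundary zeros by the cyclic propagation argument, and computes the degree by homotoping $f$ to the identity, where $\Phi_0=(I-\sigma)$ has kernel $\mathbb{R}(1,\dots,1)$ and hence restricts to an affine isomorphism onto $H$ with its unique zero at the barycenter. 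The trade-off is elementary constructiveness (the paper) versus topological machinery (yours); the payoff on your side is that the argument never actually uses $f\le x$ --- your boundary exclusion for each $f_t$ invokes only continuity and the endpoint conditions, not the inequality you list --- so you have in fact proved the lemma without that hypothesis, whereas the paper's recursion genuinely relies on it.
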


\begin{proof}
Define a sequence  of functions recursively by 
\begin{equation}g_0(x)=x~,~g_{i+1}(x)=x-1+f(g_i(x))~,~i\geq 1~.\end{equation}
Clearly $g_1$ is well-defined on $[0,1]$. Since $g_1(0)=-1<1=g_1(1)$, the Intermediate-value theorem \cite{c} implies that $g_1$ is zero somewhere on the interval $(0,1)$. Let $a_1\in (0,1)$ be the largest root of $g_1$. Now $g_2$ is well-defined on $[a_1,1]$ with $g_2(a_1)=a_1-1+f(g_1(a_1))=a_1-1<0$ and $g_2(1)=1>0$. The Intermediate-value theorem implies that $g_2$ has roots in the interval $(a_2,1)$, and we define $a_2$ to be the largest root of $g_2$. By continuing in this fashion, one defines an increasing sequence $a_i\in (0,1)$ such that $g_{i+1}$ is defined and continuous on the interval $(a_i,1)$ with $g_i(a_i)=0$ and $g_i(1)=1$. 

Now let $n$ be a fixed positive integer and define a sequence $x_i\in (0,1)$ by setting:
\begin{equation}x_0=0~,~x_{n+1}=1~;~x_i=g_{n-i}(a_n)~,~1\leq i \leq n~.\end{equation}
It is straightforward to verify that the equations \eqref{eqs} hold for the above choice of $x_i$'s.

It is left to show that the sequence $x_i$ is increasing. One has:
\begin{equation}x_i=g_{n-i}(a_n)=a_n-1+f(x_{i+1}) < x_{i+1}~,\end{equation}
since $f(x)\leq x$ for all $x\in [0,1]$ and $a_n<1$. 
\end{proof}

\begin{cor}\label{func}
Let $f:[0,1] \rightarrow \mathbb{R}$ be a continuous function such that $f(0)=0$ and $f(1)=1$. Then the conclusion of Theorem \ref{main} holds for the curve $\gamma(t)=(t,f(t))$.
\end{cor}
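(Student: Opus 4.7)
The plan is to feed $f$ into Lemma \ref{first} and set $A_i=(x_i,f(x_i))$. When $f(x)\le x$ on $[0,1]$, the lemma produces an increasing sequence $0=x_0<x_1<\cdots<x_n<x_{n+1}=1$ satisfying $f(x_{i+1})-f(x_i)=x_i-x_{i-1}$, with the convention $x_{-1}=x_n-1$. The horizontal displacements $d_i:=\pi_1(\overrightarrow{A_iA_{i+1}})=x_{i+1}-x_i$ are positive by monotonicity. The vertical displacements $e_i:=\pi_2(\overrightarrow{A_iA_{i+1}})=f(x_{i+1})-f(x_i)$ unwind, via \eqref{eqs}, to $e_0=0-(x_n-1)=1-x_n=d_n$ and $e_i=x_i-x_{i-1}=d_{i-1}$ for $1\le i\le n$. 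Hence $(e_0,\ldots,e_n)$ is the cyclic shift of $(d_0,\ldots,d_n)$; the two sequences coincide as multisets and are all positive, which is exactly the conclusion of Theorem \ref{main} for $\gamma(t)=(t,f(t))$.

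To remove the hypothesis $f\le x$, I would split into cases on the sign of $f-x$. If $f(x)\ge x$ on $[0,1]$, I would apply the argument above to the reflected function $\tilde f(x):=1-f(1-x)$, which is continuous with $\tilde f(0)=0$, $\tilde f(1)=1$, and $\tilde f(x)\le x$; the reflection $x_i\mapsto 1-x_{n+1-i}$ carries the resulting sequence back to one for $f$, and the cyclic-shift, hence multiset, structure is preserved. If $f-x$ changes sign, pick some $a\in(0,1)$ with $f(a)=a$ chosen so that $f-x$ has constant sign on at least one of $[0,a]$ or $[a,1]$, rescale each half affinely to $[0,1]$, apply the appropriate monotone case on each piece with $k$ and $n-k$ intermediate points respectively, and concatenate. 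Since the union of two pairs of equal multisets is a pair of equal multisets, the global displacement condition still holds.

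The main obstacle I foresee is the last step when $f-x$ changes sign many times: the clean split into two monotone halves may fail and one could have to iterate the splitting on finer subintervals, or else defer the genuinely mixed-sign case to the proof of Theorem \ref{main}, of which the graph case (at least when $f$ stays inside $[0,1]$) is a direct instance. The remainder is a routine translation of the cyclic-shift identity \eqref{eqs} into the multiset-of-displacements formulation \eqref{seq1}--\eqref{seq2}.
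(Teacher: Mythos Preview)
Your treatment of the case $f(x)\le x$ (and the reflected case $f(x)\ge x$) is correct and is exactly the paper's argument: invoke Lemma~\ref{first} and read off the cyclic shift between the horizontal and vertical displacement sequences. The paper is in fact terser than you are here, saying only that the statement ``follows from Lemma~\ref{first}.''

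For the mixed-sign case the paper's reduction is cleaner than your two-sided split and avoids the obstacle you flag. Write $F=\{x\in(0,1):f(x)=x\}$. If $F$ is infinite, pick any $n$ of its elements $0<a_1<\cdots<a_n<1$ and set $A_i=(a_i,a_i)$; every segment then has equal horizontal and vertical displacement and the claim is immediate. If $F$ is finite and nonempty, take $a=\max F$ (or $\min F$): on $[a,1]$ the function $f-x$ has constant sign, so the monotone case applied to the rescaling of $f\vert_{[a,1]}$ with $n-1$ intermediate points produces $A_1=(a,a),A_2,\dots,A_{n+1}=(1,1)$, and one simply prepends $A_0=(0,0)$. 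The new first segment has $\pi_1=\pi_2=a$, so the multiset equality is preserved. The point is that you never need \emph{both} halves to carry constant sign: put all the work on the one good side and zero intermediate points on the other. Your iteration idea would also succeed once you note that finiteness of $F$ guarantees termination, but it is unnecessary. (There is also a small off-by-one in your count: the split point $(a,a)$ is itself one of the $n$ intermediate $A_i$, so the two sides should receive $k$ and $n-1-k$ points, not $k$ and $n-k$.)
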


\begin{proof}
Let $F \subset [0,1]$ be the set of $x\in (0,1)$ such that $f(x)=x$. If $F$ is an infinite set, then the statement of this corollary is trivial. The case that $F$ is non-empty can be reduced to the case that $F$ is empty. And without loss of generality, one can also assume that $f(x)\leq x$ for all $x$. Then the statement follows from Lemma \ref{first}.
\end{proof}

\section{Mountain Climbers' Problem}

To prove Theorem \ref{main}, we need a modification of the standard Mountain Climbers' problem. The Mountain Climbers' problem asks for a way for each of the climbers on two sides of a mountain to travel to the top of the mountain such that they both stay at the same level at all times. Mathematically, one has the following theorem.

\begin{thm}\label{mct}\cite{mc}
Let $f_1,f_2:[0,1] \rightarrow [0,1]$ be locally non-constant continuous functions with $f_1(0)=f_2(0)=0$, $f_1(1)=f_2(1)=1$. Then there are functions $g_1,g_2$ with the same properties such that $f_1\circ g_1=f_2 \circ g_2$. 
\end{thm}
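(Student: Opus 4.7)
The plan is to reformulate the conclusion in terms of the level set
\[
E \;=\; \{(x,y)\in[0,1]^2 : f_1(x)=f_2(y)\},
\]
which contains both $(0,0)$ and $(1,1)$. Finding $g_1,g_2$ with $f_1\circ g_1=f_2\circ g_2$ and the stated boundary and non-constancy properties is equivalent to finding a continuous path $\gamma=(g_1,g_2):[0,1]\to E$ from $(0,0)$ to $(1,1)$ whose two coordinate projections are locally non-constant.

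The first step is to reduce to the piecewise linear case. I would approximate $f_1,f_2$ uniformly by locally non-constant piecewise linear $\tilde f_1^{(n)},\tilde f_2^{(n)}$ with matching boundary values. After taking a common refinement of the partitions, $[0,1]^2$ is cut into rectangular cells on which both $\tilde f_i^{(n)}$ are affine with nonzero slope, so the corresponding level set $\tilde E^{(n)}$ meets each cell in either the empty set, a single point, or a line segment, making $\tilde E^{(n)}$ a finite planar graph whose edges have finite nonzero slope.

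A small generic perturbation ensures that $\tilde E^{(n)}$ is a topological $1$-manifold away from finitely many transverse self-intersections. A parity/handshaking argument then shows that the connected component of $(0,0)$ in $\tilde E^{(n)}$ must contain $(1,1)$ as the only other odd-degree vertex of the boundary. Picking any simple arc in this component from $(0,0)$ to $(1,1)$ and parametrizing it by arclength yields a piecewise-linear curve $\tilde\gamma^{(n)}:[0,1]\to\tilde E^{(n)}$ whose coordinate functions are PL and locally non-constant (each edge has finite nonzero slope so each coordinate genuinely changes along it). The family $\{\tilde\gamma^{(n)}\}$ is equicontinuous by construction, so Arzela--Ascoli yields a uniformly convergent subsequence $\tilde\gamma^{(n_k)}\to\gamma=(g_1,g_2)$; combined with $\tilde f_i^{(n_k)}\to f_i$ uniformly this forces $\gamma([0,1])\subset E$ with the correct endpoints.

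The main obstacle I expect is preserving the locally non-constant property in the limit, since arclength-parametrized PL curves can converge to curves that are constant on some subintervals. The remedy is a final reparametrization: if $[a,b]\subset[0,1]$ is a maximal interval on which $g_1$ is constant, then $f_1\circ g_1$ is constant on $[a,b]$, hence so is $f_2\circ g_2$; since $f_2$ is locally non-constant each of its level sets is nowhere dense, which forces $g_2$ to be constant on $[a,b]$ as well. Collapsing every such constant interval of $\gamma$ to a single point and rescaling the resulting path to have domain $[0,1]$ then produces the required pair $g_1,g_2$.
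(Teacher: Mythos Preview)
The paper does not prove this theorem; it is quoted from Keleti's article and used as a black box, so there is no in-paper argument to compare against.  Your outline follows a standard and reasonable strategy, and the last step is fine: if $g_1$ is constant on $[a,b]$ then $f_2\circ g_2$ is constant there, and since $f_2$ is locally non-constant each level set $f_2^{-1}(c)$ contains no nondegenerate interval, so the connected set $g_2([a,b])\subset f_2^{-1}(c)$ is a single point; collapsing all maximal constancy intervals of $\gamma$ gives a monotone quotient map $[0,1]\to[0,1]$ through which $\gamma$ factors, and the factored curve has both coordinates locally non-constant.

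The genuine gap is the Arzel\`a--Ascoli step. With the arclength parametrization rescaled to $[0,1]$, the curve $\tilde\gamma^{(n)}$ is $L_n$-Lipschitz where $L_n$ is the length of the chosen arc in $\tilde E^{(n)}$, and equicontinuity of the family requires $\sup_n L_n<\infty$. Nothing in your construction gives this. Already for $f_1(x)=x$ the set $E$ is the reflected graph $\{(f_2(y),y):y\in[0,1]\}$, whose length dominates the total variation of $f_2$; a locally non-constant continuous $f_2$ can have infinite variation, and then any sequence of locally non-constant PL approximants $\tilde f_2^{(n)}$ has variation tending to infinity, so $L_n\to\infty$. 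In the general case the arc in $\tilde E^{(n)}$ must thread through all the local extrema of the approximants, and its length is bounded below by quantities comparable to the number of monotonicity intervals of $\tilde f_1^{(n)}$ and $\tilde f_2^{(n)}$, which you have not controlled. Without equicontinuity you cannot extract a uniformly convergent subsequence, and replacing convergence of maps by Hausdorff convergence of their images does not rescue the argument either: a Hausdorff limit of arcs is a continuum but need not be arcwise connected. Keleti's proof avoids this difficulty by analyzing the level set $E$ of the original functions directly rather than passing through PL approximations.
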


Here, we say $f$ is locally non-constant, if there is no interval on which $f$ is constant. We will need to replace the condition that both functions are locally non-constant with a stronger condition on one of the functions. 

We call a function $f:[0,1] \rightarrow \mathbb{R}$ \emph{piecewise monotone}, if there is a partition of $[0,1]$ to a finite number of subintervals on each of which $f$ is strictly increasing or strictly decreasing. 

Let $\mathcal U$ be the set of piecewise monotone continuous functions $f:[0,1] \rightarrow [0,1]$ with the property that, for any $c\in [0,1]$, the set $f^{-1}(c)$ does not contain a local  maximum and a local minimum of $f$ at the same time. 

\begin{thm}\label{diff}
Let $f_1,f_2:[0,1] \rightarrow [0,1]$ be continuous functions such that $f_1(0)=f_2(0)=0$ and $f_1(1)=f_2(1)=1$. If $f_1 \in {\mathcal U}$, then the Mountain Climbers' problem has a solution for $f_1$ and $f_2$, i.e. there exist continuous functions $g_1,g_2: [0,1] \rightarrow [0,1]$ such that $g_1(0)=g_2(0)=0$, $g_1(1)=g_2(1)=1$, and $f_1\circ g_1=f_2 \circ g_2$. 
\end{thm}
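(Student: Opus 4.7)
My plan is to recast the theorem as a path-connectivity statement in the level-set space
\[
X = \{(s,t) \in [0,1]^2 : f_1(s) = f_2(t)\}.
\]
Any continuous path $\sigma: [0,1] \to X$ with $\sigma(0)=(0,0)$ and $\sigma(1)=(1,1)$ produces a solution to the Mountain Climbers' problem by taking $g_1, g_2$ to be its coordinate functions, so it suffices to show that $(0,0)$ and $(1,1)$ lie in the same path-component of $X$.

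First I would approximate $f_2$ uniformly by a sequence of piecewise-linear, locally non-constant continuous functions $f_2^{(n)}: [0,1] \to [0,1]$ with $f_2^{(n)}(0)=0$ and $f_2^{(n)}(1)=1$. Since $f_1$ is piecewise monotone and hence locally non-constant, Theorem~\ref{mct} applies to the pair $(f_1, f_2^{(n)})$ and produces continuous $(g_1^{(n)}, g_2^{(n)})$ with $g_i^{(n)}(0)=0$, $g_i^{(n)}(1)=1$, and $f_1 \circ g_1^{(n)} = f_2^{(n)} \circ g_2^{(n)}$. The goal is then to extract a subsequential limit.

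The crux of the argument is a uniform complexity bound on these solutions. Let $0 = a_0 < a_1 < \cdots < a_k = 1$ be the partition on which $f_1$ is strictly monotone, with critical values $c_i = f_1(a_i)$. The hypothesis $f_1 \in \mathcal{U}$ says that for each critical altitude $c$, the critical points in $f_1^{-1}(c)$ are all of the same type (all local maxima or all local minima). This makes the branching behaviour of the level-set graph $X^{(n)} = \{(s,t) : f_1(s) = f_2^{(n)}(t)\}$ consistent at every critical altitude, which allows one to select a canonical solution in which $g_1^{(n)}$ is piecewise monotone, with turning points only at the partition points $a_i$, and with the number of visits to each monotone piece $[a_{i-1}, a_i]$ bounded independently of $n$. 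In particular the total variation of $g_1^{(n)}$ is bounded by a constant depending only on $f_1$.

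With this bound, a Helly-type selection yields a subsequence $g_1^{(n_j)}$ converging pointwise to a function of bounded variation, and one can reconcile the limit with the continuity constraint to produce a continuous piecewise-monotone $g_1$. Passing to a further subsequence and using the identity $f_1 \circ g_1^{(n)} = f_2^{(n)} \circ g_2^{(n)}$ together with uniform convergence $f_2^{(n)} \to f_2$ and continuity of $f_1, f_2$, one obtains a continuous $g_2$ with $f_1 \circ g_1 = f_2 \circ g_2$. I expect the main obstacle to be precisely the uniform variation bound: without property $\mathcal{U}$, a level set of $f_1$ containing both a maximum and a minimum produces genuinely bifurcating behaviour in $X^{(n)}$, and small perturbations of $f_2^{(n)}$ can flip which branch the canonical path selects, forcing the variation of $g_1^{(n)}$ to blow up as $n \to \infty$ and precluding any limit argument.
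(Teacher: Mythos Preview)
Your central structural claim---that one can always select solutions $g_1^{(n)}$ which are piecewise monotone with turning points only at the partition points $a_i$ of $f_1$, and hence of uniformly bounded variation---is not correct. In any Mountain Climbers' solution, $g_1$ changes direction not only when climber~1 reaches a critical point of $f_1$, but also whenever climber~2 reaches a critical point of $f_2$: at such a moment the common altitude reverses, and climber~1 (who is on a strictly monotone stretch of $f_1$) must reverse as well. Thus the turning values of $g_1^{(n)}$ include all $f_1$-preimages of critical values of $f_2^{(n)}$, and the variation of $g_1^{(n)}$ is governed by the complexity of $f_2^{(n)}$, not of $f_1$. The simplest witness is $f_1(s)=s$: then necessarily $g_1^{(n)}=f_2^{(n)}\circ g_2^{(n)}$, and since $g_2^{(n)}$ is a continuous surjection of $[0,1]$ onto itself with $g_2^{(n)}(0)=0$, $g_2^{(n)}(1)=1$, one has $\operatorname{Var}(g_1^{(n)})\ge \operatorname{Var}(f_2^{(n)})$, which blows up whenever $f_2$ has infinite variation. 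So there is no uniform Helly bound, and the limiting scheme as written cannot be carried out. The obstruction you describe in the non-$\mathcal{U}$ case is genuine, but the $\mathcal{U}$ hypothesis on $f_1$ does not by itself control the variation coming from $f_2$.

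For comparison, the paper avoids limits entirely. It replaces $f_2$ by a \emph{single} locally non-constant function $f_3$: on each maximal interval of constancy $I_\alpha=[a_\alpha,b_\alpha]$ of $f_2$ (with value $c_\alpha$) one inserts a small parabolic bump of amplitude $d_\alpha$, bumping upward if $f_1^{-1}(c_\alpha)$ contains no local minima of $f_1$ and downward otherwise; the $\mathcal{U}$ hypothesis is exactly what makes this choice well-defined. Applying Theorem~\ref{mct} to $(f_1,f_3)$ produces $h,k$ with $f_1\circ h=f_3\circ k$. The choice of bump direction and size then forces $h$ to agree at the two endpoints of every component of $k^{-1}(I_\alpha)$, so one may flatten $h$ to that common value on each such component to get $g_1$, set $g_2=k$, and obtain $f_1\circ g_1=f_2\circ g_2$ directly. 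No compactness or approximation is needed.
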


\begin{proof}
If $f_2$ is also locally non-constant, then the result follows from Theorem \ref{mct}. Thus, suppose $f_2$ is constant on a collection of mutually disjoint intervals $I_\alpha=[a_\alpha,b_\alpha]$, $\alpha \in J$, where $J$ is a finite or countable set. Let $f_2(I_\alpha)=\{c_\alpha\}$. Since $f_1 \in \mathcal U$, the set 
\begin{equation}\Lambda_\alpha=f_1^{-1}(\{c_\alpha\})~\end{equation}
is finite. We define a locally non-constant function $f_3$ as follows. On the complement of $\cup_\alpha I_\alpha$, we let $f_3=f_2$. To define $f_3$ on $I_\alpha$, choose $d_\alpha>0$ so that $f_1$ has no local max or min values in the interval $[c_\alpha-d_\alpha, c_\alpha+d_\alpha]$ except possibly $c_\alpha$ itself. We define $f_3$ on $I_\alpha$ by setting:
\begin{equation}f_3(x)=c_\alpha \pm {{4d_\alpha} \over{(b_\alpha-a_\alpha)}}(x-a_\alpha)(x-b_\alpha)~.\end{equation}
Here the plus sign is chosen if $\Lambda_\alpha$ contains no minimums, and the minus sign is chose otherwise. Note that $f_3$ is a continuous locally non-constant function on $[0,1]$.
 
Since $f_1$ and $f_3$ satisfy the conditions of Theorem \ref{mct}, we conclude that there are functions $h,k:[0,1] \rightarrow [0,1]$ such that $f_1 \circ h=f_3\circ k$. Let
\begin{equation}
k^{-1}(a_\alpha, b_\alpha)=\bigcup_{\beta \in S_\alpha^\beta} (u_{\alpha}^\beta, v_\alpha^\beta)~,
\end{equation}
where $S_\alpha^\beta$ is some index set (at most countable) and the intervals $J_\alpha^\beta=(u_{\alpha}^\beta, v_\alpha^\beta)$, $\beta \in S_\alpha^\beta$, are pairwise disjoint.
We show that $h (u_\alpha^\beta)=h(v_\alpha^\beta)$. We have 
\begin{equation}f_1 \circ h(u_\alpha^\beta)=f_3 \circ k(u_\alpha^\beta)=c_\alpha=f_3 \circ k(v_\alpha^\beta)=f_1 \circ  h(v_\alpha^\beta)~.\end{equation}
It follows that $h(u_\alpha^\beta), h(v_\alpha^\beta) \in f_1^{-1}(\{c_\alpha\})=\Lambda_\alpha$. If $h(u_\alpha^\beta) \neq h(v_\alpha^\beta)$, then there must exist $t_0 \in J_\alpha^\beta$ such that $f_1\circ h(t_0)$ is a maximum or minimum value for $f_1$ and $f_1 \circ h(t_0) \neq c_\alpha$. By the definition of $d_\alpha$, it follows that $|f_1\circ h(t_0)-c_\alpha|>d_\alpha$. On the other hand, $k(t_0) \in (a_\alpha, b_\alpha)$, and so
\begin{equation}
|f_1 \circ h(t_0)-c_\alpha|=|f_3 \circ k(t_0)-c_\alpha|\leq d_\alpha~.
\end{equation}
This is a contradiction, and so $h(u_\alpha^\beta)=h(v_\alpha^\beta)$.

Next, we define $g_1$ by setting:

\[ g_1(t) = \left \{ \begin{array}{ll}
{h(t)} & t \notin \cup_{\alpha,\beta}J_\alpha^\beta~, \\
{h(u_\alpha^\beta)=h(v_\alpha^\beta)}&\exists \alpha \in J, \beta \in J_\alpha^\beta: t\in J_\alpha^\beta~,\\
\end{array} \right .
\]
and let $g_2=k$. Then $g_1$ and $g_2$ are continuous functions, $g_1(0)=g_2(0)=0$, $g_1(1)=g_2(1)=1$, and $f_1 \circ g_1=f_2 \circ g_2$.
\end{proof}

\section{Partitioning functions and points}

In this section, we consider a special class of curves, namely the class $\mathcal C$ of curves $\gamma:[0,1] \rightarrow [0,1]^2$ with $\gamma(0)=(0,0)$, $\gamma(1)=(1,1)$, and $\pi_2 \circ \gamma \in \mathcal U$. Similar statements for more general curves will be proved in \S3 by taking limits. 

\begin{prop}\label{func}
Suppose $\gamma \in {\mathcal C}$. Then for each $n \in \mathbb{N}$, there exist continuous functions $y,x_i:[0,1] \rightarrow [0,1]$, $i=1,\ldots, n$, such that:
\begin{itemize}
\item[i)] $(x_i(t),x_{i-1}(t)+y(t)) \in \gamma$, $\forall i=1,\ldots,n$, where $x_0(t)=0$. 
\item[ii)] $x_i(0)=y(0)=0~,~\forall i=1,\ldots,n.$
\item[iii)] $(x_n(1),x_{n-1}(1)+y(1))=(1,1)$.
\end{itemize}
\end{prop}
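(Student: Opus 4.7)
The plan is to prove by induction on $n$ a reformulation of the proposition in terms of parametrizations on $\gamma$. Let $\phi=\pi_1\circ\gamma$ and $\psi=\pi_2\circ\gamma$, so $\psi\in\mathcal U$. I will construct continuous $s_1,\ldots,s_n\colon[0,1]\to[0,1]$ satisfying $s_i(0)=0$, $s_n(1)=1$, and
$$\psi(s_i(t))=\phi(s_{i-1}(t))+\psi(s_1(t)),\qquad i=1,\ldots,n,$$
with the convention $s_0\equiv 0$. Setting $x_i=\phi\circ s_i$ and $y=\psi\circ s_1$ then produces the functions required by the proposition, because $(x_i(t),x_{i-1}(t)+y(t))=\gamma(s_i(t))\in\gamma$, and the endpoint condition $(x_n(1),x_{n-1}(1)+y(1))=\gamma(1)=(1,1)$ follows from $s_n(1)=1$.

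The base case $n=1$ is immediate: take $s_1(t)=t$. For the inductive step from $n-1$ to $n$, suppose $s_1,\ldots,s_{n-1}$ have already been constructed. I would define the continuous function $F(t)=\phi(s_{n-1}(t))+\psi(s_1(t))$. Then $F(0)=0$ and $F(1)=\phi(1)+\psi(s_1(1))=1+\psi(s_1(1))\geq 1$, so the Intermediate-value theorem produces $t^{*}=\min\{t\in[0,1]:F(t)=1\}$, which is positive, and $F$ maps $[0,t^{*}]$ into $[0,1]$ by the minimality of $t^{*}$ (together with $0\leq\phi,\psi\leq 1$). Rescaling, set $\widehat F(\tau)=F(t^{*}\tau)$; this is a continuous function $[0,1]\to[0,1]$ with $\widehat F(0)=0$ and $\widehat F(1)=1$.

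Next, I would apply Theorem \ref{diff} to the pair $\psi\in\mathcal U$ and $\widehat F$ to obtain continuous $g_1,g_2\colon[0,1]\to[0,1]$ with $g_i(0)=0$, $g_i(1)=1$, and $\psi\circ g_1=\widehat F\circ g_2$. The new parametrizations for the $n$-step would be $\tilde s_n(t)=g_1(t)$ and $\tilde s_i(t)=s_i(t^{*}g_2(t))$ for $i<n$. Continuity and the boundary conditions $\tilde s_i(0)=0$ and $\tilde s_n(1)=1$ are immediate. The constraint for $i<n$ is inherited from the inductive hypothesis evaluated at the reparametrized time $t^{*}g_2(t)$, while for $i=n$ one computes $\psi(\tilde s_n(t))=\widehat F(g_2(t))=F(t^{*}g_2(t))=\phi(\tilde s_{n-1}(t))+\psi(\tilde s_1(t))$.

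The main obstacle is choosing the right inductive statement. A direct induction on the coordinate functions $x_i,y$ fails because the endpoint configuration at $t=1$ for the $(n-1)$-case, which places the last climber exactly at $(1,1)$, cannot be extended to an $n$-configuration: one has to back the climbers off from $(1,1)$ to leave vertical room for the new top climber. Passing to parametrizations and introducing the truncation time $t^{*}$ addresses exactly this: $t^{*}$ is the first moment at which adding a further climber would force $\psi$ past level $1$. The hypothesis $\gamma\in\mathcal C$, i.e.\ $\psi\in\mathcal U$, is used in just one place inside the inductive step, namely to invoke Theorem \ref{diff} and obtain the reparametrization that installs the new climber.
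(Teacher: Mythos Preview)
Your proof is correct and follows essentially the same approach as the paper: induction on $n$, identification of the truncation time $t^{*}$ (the paper's $t_0$) as the first time the sum $x_{n-1}+y$ reaches $1$, an application of Theorem~\ref{diff} with $f_1=\psi=\pi_2\circ\gamma\in\mathcal U$ and $f_2=\widehat F$, and the resulting reparametrization via $g_1,g_2$. The only cosmetic difference is that you keep track of parametrizations $s_i$ on $\gamma$ and recover $x_i=\phi\circ s_i$, $y=\psi\circ s_1$ at the end, whereas the paper works directly with the coordinate functions; the two formulations are equivalent and the argument is the same.
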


We call the set of functions $x_i$, $i=1,\ldots, n+1$, a set of partitioning functions for $\gamma$. 

\begin{proof}
Proof is by induction on $n$. For $n=1$, one takes $x_1(t)=\pi_1 \circ \gamma(t)$ and $y(t)=\pi_2 \circ \gamma(t)$. Suppose there exist functions $u_1,\ldots, u_n$, and $v(t)$ that satisfy $i$-$iii$. Let $t_0$ be the smallest $t$ for which $u_n(t)+v(t)=1$ (and so $t_0>0$ and possibly $t_0=1$). We define two functions $f_1,f_2$ by setting
\begin{equation}f_1(t)=\pi_2 \circ \gamma(t)~,~f_2(t)=u_n(t_0t)+v(t_0t)~,~t\in[0,1]~.\end{equation}
Functions $f_1,f_2$ satisfy all of the conditions of Theorem \ref{diff}. It follows that there are continuous functions $g_1,g_2$ such that $f_1\circ g_1=f_2 \circ g_2$. Let's define:
\begin{equation}y(t)=v(t_0g_2(t))~;~x_i(t)=u_i (t_0g_2(t))~,~\forall i\leq n~;~x_{n+1}(t)=\pi_1 \circ \gamma \circ g_1(t)~.\end{equation}
It is straightforward to check that conditions $i$-$iii$ are satisfied by the above set of functions. \end{proof}

In the next Proposition, $\Delta=\{(a,b) \in (0,1)^2: a>b\}$.

\begin{prop}\label{points}
Suppose $\gamma \in \mathcal C$. Then for each $n\in \mathbb{N}$ there exists a sequence  of points $A_i$, $i\leq n+1$,  on the curve so that 
\begin{equation}\label{relations}
\pi_2(\overrightarrow{A_iA_{i+1}})=\pi_1 (\overrightarrow{A_{i-1}A_i})~,~i=0,\ldots,n+1~,\end{equation}
where $A_{-1}=A_{n+1}-(1,1)$, $A_0=(0,0)$, and $A_{n+2}=(1,1)$. Moreover, if $\gamma(t) \in \Delta$ for all $t\in (0,1)$, then the $A_i$'s can be chosen to be distinct. 
\end{prop}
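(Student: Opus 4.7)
The plan is to deduce Proposition \ref{points} from Proposition \ref{func} via a one-parameter intermediate value argument. First I would apply Proposition \ref{func} at level $n+1$, obtaining continuous functions $x_1,\dots,x_{n+1},y:[0,1]\to[0,1]$ (with $x_0\equiv 0$) such that $(x_i(t),x_{i-1}(t)+y(t))\in\gamma$ for every $t\in[0,1]$ and $1\le i\le n+1$, all vanishing at $t=0$, with $(x_{n+1}(1),x_n(1)+y(1))=(1,1)$. The candidate points for the proposition will be $A_0=(0,0)$ together with $A_i=(x_i(t_*),x_{i-1}(t_*)+y(t_*))$ for $1\le i\le n+1$, where $t_*\in[0,1]$ is a parameter to be chosen; these all lie on $\gamma$ by construction.

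For every $t_*$, the relations $\pi_2(\overrightarrow{A_{i-1}A_i})=\pi_1(\overrightarrow{A_{i-2}A_{i-1}})$ with $2\le i\le n+1$ (i.e.\ the cases $i=1,\dots,n$ of the proposition) are automatic, since the shifted-coordinate form of $A_i$ makes both sides equal $x_{i-1}(t_*)-x_{i-2}(t_*)$. The two remaining cases $i=0$ and $i=n+1$ involve the boundary points $A_{-1}=A_{n+1}-(1,1)$ and $A_{n+2}=(1,1)$, and a direct computation shows that each of them reduces to the single scalar equation
\[
x_{n+1}(t_*)+y(t_*) = 1 .
\]
To produce such $t_*$ I would apply the Intermediate Value Theorem to the continuous function $h(t):=x_{n+1}(t)+y(t)$: it satisfies $h(0)=0$ and $h(1)=1+y(1)\ge 1$, so there exists $t_*\in(0,1]$ with $h(t_*)=1$.

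For the distinctness clause under $\gamma(t)\in\Delta$ for $t\in(0,1)$, my first step is to show that $y(t_*)>0$. Indeed, if $y(t_*)=0$, then $h(t_*)=1$ forces $x_{n+1}(t_*)=1$, so $A_{n+1}=(1,x_n(t_*))$ has first coordinate $1$; by the $\Delta$-hypothesis the only such point of $\gamma$ is $(1,1)$, so $x_n(t_*)=1$, and iterating this argument backward through the indices yields $x_k(t_*)=1$ for every $k$, contradicting $x_0\equiv 0$. Once $y(t_*)>0$, each $A_i$ lies in $\Delta$ and satisfies $x_i(t_*)>x_{i-1}(t_*)+y(t_*)>x_{i-1}(t_*)$, so the first coordinates of $A_0,A_1,\dots,A_{n+1}$ are strictly increasing and the points are pairwise distinct.

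The main obstacle is spotting the right move at the outset; after that the argument is essentially one line of IVT plus bookkeeping. The small miracle is that the two wrap-around constraints, which look a priori independent, collapse into the same scalar equation, so exactly one degree of freedom in $t_*$ suffices. In the distinctness step the only subtle piece is the backward-propagation argument that rules out $y(t_*)=0$.
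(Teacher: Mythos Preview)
Your proof is correct and close in spirit to the paper's, with one simplifying twist. The paper invokes Proposition~\ref{func} at level $n$, so the partitioning functions supply $A_1,\dots,A_n$, and then it locates $A_{n+1}$ by a planar intersection argument: the auxiliary curve $\eta(t)=(1-y(t),\,x_n(t)+y(t))$ runs from $(1,0)$ to a point with second coordinate exceeding $1$, and therefore must cross $\gamma$. You instead invoke Proposition~\ref{func} at level $n+1$, so $A_{n+1}$ already comes from the partitioning family, and the two wrap-around constraints collapse to the single scalar equation $x_{n+1}(t_*)+y(t_*)=1$, solved by a one-variable IVT. This trades a two-dimensional curve intersection for a one-dimensional one, which is a little cleaner. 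Two minor patches: in your $y(t_*)=0$ case the backward propagation gives $x_k(t_*)=1$ for $k=n+1,n,\dots,1$, and the actual contradiction is that $A_1=(1,0)$ would lie on $\gamma$, not literally that $x_0=1$; and the claim that each $A_i\in\Delta$ once $y(t_*)>0$ deserves one more line ruling out $A_i=(1,1)$ (for $i\le n$ this would force $\pi_2(A_{i+1})=1+y(t_*)>1$, while for $i=n+1$ the constraint $x_{n+1}(t_*)+y(t_*)=1$ forces $y(t_*)=0$).
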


\begin{proof}
Recall from Proposition \ref{func} that there are partitioning functions $y,x_i$, $i=1,\ldots, n$, satisfying conditions $i$-$iii$. Define a continuous curve $\eta: [0,1] \rightarrow \mathbb{R}^2$ by setting
\begin{equation}\eta(t)=(1-y(t), x_{n}(t)+y(t))~.\end{equation}
We will show that $\eta$ and $\gamma$ intersect. First, we show that $y(1)> 0$. Otherwise, $x_{n-1}(1)=1$ and a little induction implies that $x_i(1)=1$ for all $i\geq 1$. It would follow that $(1,0)=(x_1(1),y(1)) \in \gamma$ which is a contradiction. 

Next, we note that $\eta(0)=(1,0)$ while $\eta(1)=(1-y(1), 1+y(1))$. Since $\pi_1 \circ \eta(t) \in [0,1]$, it follows that $\eta$ and $\gamma$ intersect at a point $A_{n+1}=\eta(t_0)=(1-y(t_0), x_n(t_0)+y(t_0))$, where $t_0\in [0,1]$. We then define $A_i=(x_i(t_0),x_{i-1}(t_0)+y(t_0))$ for $i=1,\ldots, n$. It is then straightforward to check that the $A_i$'s, $i=1,\ldots, n+1$ satisfy the relations \eqref{relations}.

Next suppose that $\gamma(t) \in \Delta$ for all $t\in (0,1)$. Then, we have $A_i \in (0,1)^2$ for $i\neq 0,n+2$. To see this, suppose $A_i=(x_i(t_0),x_{i-1}(t_0)+y(t_0))=(0,0)$ for some $i\in \{1,\ldots,n\}$. Then $y(t_0)=0=x_i(t_0)$ for all $i=0,\ldots,n$. But then $\eta(t_0)=(1,0) \notin \gamma$ which is a contradiction. If $A_{n+1}=\eta(t_0)=(0,0)$, then $y(t_0)=1$ and $x_n(t_0)=0$. But then $A_n=(x_{n}(t_0), x_{n-1}(t_0)+y(t_0))=(0,x_{n-1}(t_0)+1) \in \gamma$ which is again a contradiction. Similar arguments show that $A_i \neq (1,1)$ for all $i=1,\ldots, n+1$. Now, since $\gamma$ is contained in $\Delta$ for $t\in (0,1)$, we have $\pi_2 (A_i) < \pi_1 (A_i)$ for all $i \neq 0,n+2$. In particular, we have 
\begin{equation}\label{difference}
\pi_1(A_{i})-\pi_1(A_{i-1})=\pi_1(A_{i})-\pi_2(A_{i})+y(t_0)~,
\end{equation}
which is positive for $i=1,\ldots, n+1$. It follows that the sequence $\pi_1 (A_i)$, $i=0,\ldots, n+2$, is a strictly increasing sequence. In particular, the $A_i$'s are all distinct.
\end{proof}

Proposition \ref{points} implies Theorem \ref{main}, if $\gamma \in \mathcal C$ and $\gamma(t) \in \Delta$ for all $t\in (0,1)$. In fact, it states a stronger result, namely the sequences \eqref{seq1} and \eqref{seq2} are the same up to a shift permutation. This conclusion motivates the following conjecture.
\\

\noindent \textbf{Conjecture}. Suppose $\gamma:[0,1] \rightarrow [0,1]^2$ is a continuous curve and that $\gamma(0)=(0,0)$, $\gamma(1)=(1,1)$ and $0< \pi_2 \circ \gamma(t)< \pi_1 \circ \gamma(t)<1$ for all $t\in (0,1)$. Then for any $n\in \mathbb{N}$ and any cyclic permutation $\theta$ of $\{0,1,\ldots, n\}$, there exist distinct points $A_i$, $0\leq i \leq n+1$, on $\gamma$ such that $A_0=(0,0)$, $A_{n+1}=(1,1)$, and 
\begin{equation}\pi_1 (\overrightarrow{A_iA_{i+1}})=\pi_2 (\overrightarrow{A_{\theta(i)}A_{\theta(i)+1}})>0~,~0\leq i \leq n~.\end{equation}

\section{Proof of Theorem \ref{main}}

In this section we generalize the conclusion of Proposition \ref{points} to a larger class of curves. First, we remove the condition that $\pi_2 \circ \gamma \in \mathcal U$.

\begin{prop} \label{conc}
Let $\gamma:[0,1] \rightarrow [0,1]^2$ be a continuous curve such that $\gamma(0)=(0,0)$, $\gamma(1)=(1,1)$, and $\gamma (t) \in \Delta$ for all $t\in (0,1)$. Then for each $n\in \mathbb{N}$, there exists a sequence of distinct points $A_i$, $i=1,\ldots, n+1$ such that relations \eqref{relations} hold.
\end{prop}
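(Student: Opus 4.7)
The plan is to derive Proposition \ref{conc} by approximating $\gamma$ with curves $\gamma_k \in \mathcal{C}$ to which Proposition \ref{points} applies, and then extracting a subsequential limit of the resulting point configurations.

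For the approximation, I would choose a partition $0 = t_0^{(k)} < t_1^{(k)} < \cdots < t_{N_k}^{(k)} = 1$ with mesh tending to zero, perturbed slightly so that the values $\pi_2(\gamma(t_j^{(k)}))$ are pairwise distinct. Let $\gamma_k$ be the piecewise linear curve through the vertices $\gamma(t_j^{(k)})$, parameterized linearly on each subinterval. Then $\pi_2 \circ \gamma_k$ is a piecewise linear function whose local maximum values and local minimum values are all distinct, so $\pi_2 \circ \gamma_k \in \mathcal{U}$ and $\gamma_k \in \mathcal{C}$. Since $\Delta$ is convex and every interior vertex lies in $\Delta$ while the endpoint vertices are $(0,0)$ and $(1,1)$, each $\gamma_k(t)$ remains in $\Delta$ for $t \in (0,1)$. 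Uniform continuity of $\gamma$ gives $\gamma_k \to \gamma$ uniformly.

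Applying Proposition \ref{points} to each $\gamma_k$ produces distinct points $A_0^{(k)}, \ldots, A_{n+2}^{(k)}$ on $\gamma_k$ satisfying \eqref{relations} with $A_0^{(k)} = (0,0)$ and $A_{n+2}^{(k)} = (1,1)$. By compactness of $[0,1]^2$ I may pass to a subsequence so that $A_i^{(k)} \to A_i$ for each $i$. Writing $A_i^{(k)} = \gamma_k(s_i^{(k)})$ and passing to a further subsequence with $s_i^{(k)} \to s_i$, the uniform convergence of $\gamma_k$ to $\gamma$ gives $A_i = \gamma(s_i) \in \gamma$. The relations \eqref{relations} then pass to the limit by continuity of the projections.

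The remaining step, which I expect to be the main obstacle, is to argue that the limit points $A_1, \ldots, A_{n+1}$ are distinct. Writing $A_i = (p_i, q_i)$, the limit relations yield non-decreasing sequences $p_i, q_i$ with $q_{i+1} - q_i = p_i - p_{i-1}$, where some gaps may degenerate to zero. Since $\pi_2 \circ \gamma$ vanishes only at $t = 0$, a zero $y$-coordinate forces $A_i = (0,0)$, and the cascading identity $q_{i+1} - q_i = p_i - p_{i-1}$ then collapses the whole sequence to $(0,0)$, contradicting $A_{n+2} = (1,1)$; symmetrically, $A_i = (1,1)$ is impossible for $i \leq n+1$, so every interior $A_i$ lies in $\Delta$. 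Ruling out an interior coincidence $A_i = A_{i+1}$, which would require two consecutive zero increments $p_i - p_{i-1} = p_{i+1} - p_i = 0$, is the technical crux: I would handle it by exploiting the freedom in choosing the approximants $\gamma_k$ (and the parameters $t_0^{(k)}$ arising in the proof of Proposition \ref{points}) together with a selection argument to extract a subsequential limit whose $A_i$ are pairwise distinct.
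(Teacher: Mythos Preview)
Your overall strategy---approximate by curves in $\mathcal{C}$, apply Proposition~\ref{points}, pass to a subsequential limit---is exactly the paper's approach, and your approximation (piecewise linear through perturbed sample points, using convexity of $\Delta$) is a legitimate variant of the paper's construction.

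The gap is in Step~3. You correctly argue, via the cascading identity, that no interior $A_i$ can equal $(0,0)$ or $(1,1)$, so each $A_i$ with $1\le i\le n+1$ lies in $\Delta$. But you then declare the remaining distinctness ``the technical crux'' and propose to handle it by an unspecified selection argument exploiting freedom in the approximants. That is neither necessary nor likely to work: nothing about the choice of $\gamma_k$ prevents limit points from colliding.

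What you are missing is a purely algebraic consequence of the relations~\eqref{relations} that the paper exploits (it is equation~\eqref{difference} in the proof of Proposition~\ref{points}). From $q_{i+1}-q_i = p_i - p_{i-1}$ one gets that $c := q_i - p_{i-1}$ is independent of $i$; hence
\[
p_i - p_{i-1} \;=\; (p_i - q_i) + c.
\]
At the level of the approximants this is $\pi_1(A_i^{(k)})-\pi_1(A_{i-1}^{(k)}) = \bigl(\pi_1(A_i^{(k)})-\pi_2(A_i^{(k)})\bigr) + y^{(k)}(t_0^{(k)})$ with $y^{(k)}\ge 0$, and it passes to the limit as the inequality $p_i - p_{i-1}\ge p_i - q_i$. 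Now if $p_i = p_{i-1}$ for some $i$, this forces $p_i\le q_i$, i.e.\ $A_i$ lies on the diagonal, hence $A_i\in\{(0,0),(1,1)\}$---which you have already excluded for $1\le i\le n+1$. (Equivalently: once you know $A_1\in\Delta$ you have $c=q_1-p_0=q_1>0$, and then $p_i-p_{i-1}=(p_i-q_i)+c>0$ for every interior $i$ since $A_i\in\Delta$ gives $p_i>q_i$.) Either way the $p_i$ are strictly increasing and the $A_i$ are distinct; no selection argument is needed.
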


\begin{proof} The proof is divided into several steps.\\

\emph{Step 1}. Given $\gamma(t)=(g(t),f(t))$, we first construct a sequence of curves $\gamma_k:[0,1] \rightarrow [0,1]^2$ with $\pi_2 \circ \gamma_k  \in \mathcal U$ such that $\gamma_k \rightarrow \gamma$ uniformly. For each $k$, we define an increasing sequence $I_k=\{P_{i,k} \in [0,1]: 0 \leq i\leq J_k\}$ as follows. Let $P_{0,k}=0$. Suppose $P_{i,k}$ is defined for some $i \geq 0$. Choose $P_{i+1,k}>P_{i,k}$ such that $1/(2k)<P_{i+1,k}-P_{i,k}<1/k$ and $f(P_{i+1,k}) \neq f(P_{j,k})$ for all $j\leq i$. This choice of $P_{i+1,k}$ is always possible except in the following two cases.\\

Case i) $P_{i,k}+{1/2k}>1$. In this case, we simply set $P_{i+1,k}=1$ and $J_k=i+1$.\\

Case ii) $f$ is constant on the interval $V=(P_{i,k}+1/2k, P_{i,k}+1/k)$ and is equal to $f(P_{j,k})$ for some $j\leq i$. In this case, let $[c,d]$ be the largest interval containing $V$ on which $f$ is constant. If $d=1$, we again let $P_{i+1,k}=1$, and we are done. Otherwise, we choose $P_{i+1,k}$ in the interval $[d,d+1/k] \cap [d,1]$ so that $f(P_{i+1,k}) \neq f(P_{j,k})$ for all $j\leq i$. 
\\

One continues this process until we obtain a collection $I_k=\{P_{i,k}: 0\leq i\leq J_k\}$ with $P_{J_k,k}=1$. We define $f_k$ by setting:

\begin{equation}f_k(t)=\left (1-{{t-P_{i,k}} \over {P_{i+1,k}-P_{i,k}}} \right ) f(P_{i,k})+{{t-P_{i,k}} \over {P_{i+1,k}-P_{i,k}}}f(P_{i+1,k})~,~t\in [P_{i,k},P_{i+1,k}]~.\end{equation}
In other words, the graph of $f_k$ is comprised of straight segments connecting the points $(P_{i,k},f(P_{i,k}))$ consecutively for $i=0,\ldots,J_k-1$ by straight segments. Finally, we let $\gamma_k(t)=(g(t),f_k(t))$. Each $\gamma_k$ belongs to $\mathcal U$ and $\gamma_k \rightarrow \gamma$ uniformly. 
\\

\emph{Step 2}. Next, we use Proposition \ref{points} to obtain, for each $k\geq 1$, a sequence $A_{ik}=\gamma_k(t_{ik})$, $i=0,\ldots, n+2$, of points on $\gamma_k$ satisfying the conditions \eqref{relations}. One can derive a subsequence of $\gamma_k$ (denoted again by $\gamma_k$) along which all of the sequences $t_{ik}$ are convergent. We define $A_i=\lim \gamma_k(t_{ik})$ as $k \rightarrow \infty$. Since $\gamma_k \rightarrow \gamma$ uniformly, we have $A_i \in \gamma$. Moreover, the relations \eqref{relations} still hold for this set of limit points. 
\\

\emph{Step 3}. Finally, we show that $A_i \neq A_j$ for $i\neq j$. Recall from \eqref{difference} that
\begin{equation}\pi_1(A_{ik})-\pi_1(A_{i-1~k}) \geq \pi_1(A_{ik})-\pi_2 (A_{ik})~.\end{equation}
By taking the limit as $k\rightarrow \infty$, we obtain 
\begin{equation}\pi_1(A_i) - \pi_1(A_{i-1}) \geq \pi_1(A_i)-\pi_2(A_i)~.\end{equation}
This implies that the sequence $\pi_1(A_i)$ is a non-decreasing sequence. Moreover, if $\pi_1(A_i)=\pi_1(A_{i-1})$, then $\pi_1(A_i)-\pi_2(A_i)=0$ and so $A_i=(0,0)$ or $A_i=(1,1)$. In other words, there exist numbers $u,v$ such that $A_i=(0,0)$ for $i\leq u$, $A_i=(1,1)$ for $i\geq v$, and $A_i$'s are distinct for $u \leq i \leq v$. We need to show that $u=0$ and $v=n+2$. If $u>0$, then $0=\pi_1(\overrightarrow{A_{u-1}A_{u}})=\pi_2 (\overrightarrow {A_uA_{u+1}})$, which implies that $\pi_2 (A_{u+1})=0$. Since $\gamma(t) \in \Delta$ for $t\neq 0,1$, this implies that $A_{u+1} =(0,0)$ which is a contradiction. Similarly, one shows that $v=n+2$. And so the $A_i$'s are all distinct. In fact the sequences $\pi_1 (A_i)$ and $\pi_2(A_i)$ are strictly increasing sequences. 
\end{proof}

\begin{cor}\label{befo}
Let $\gamma: [0,1] \rightarrow \mathbb{R}^2$ be a continuous curve such that $\gamma(0)=(0,0)$, $\gamma(1)=(1,1)$, $1>\pi_1 \circ \gamma(t) >\pi_2 \circ \gamma(t)$ for all $t\in (0,1)$. Then the same conclusion as in Proposition \ref{conc} holds. 
\end{cor}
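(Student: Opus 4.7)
The plan is to adapt the three-step proof of Proposition \ref{conc} by first extending its auxiliary tools to the $\mathbb{R}$-valued setting. The classical Mountain Climbers' Theorem (Theorem \ref{mct}) is valid for locally non-constant continuous $f_1, f_2 : [0,1] \to \mathbb{R}$ with matching endpoint values---its proof via the level-set graph never uses boundedness of the range. Consequently Theorem \ref{diff} extends verbatim once $\mathcal{U}$ is enlarged to the class $\tilde{\mathcal{U}}$ of piecewise monotone continuous functions $[0,1]\to\mathbb{R}$ with the same local max/min separation property; the parabolic interpolant $f_3$ used in the proof of Theorem \ref{diff} is unaffected by allowing $|f_1|,|f_2|$ to be unbounded, since it only depends on the local values $c_\alpha$ and $d_\alpha$.

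With these in hand, Propositions \ref{func} and \ref{points} apply unchanged to continuous curves $\gamma:[0,1]\to\mathbb{R}^2$ with $\gamma(0)=(0,0)$, $\gamma(1)=(1,1)$, and $\pi_2\circ\gamma\in\tilde{\mathcal{U}}$. The distinctness portion of Proposition \ref{points} depends only on the strict inequality $\pi_1\circ\gamma(t)>\pi_2\circ\gamma(t)$ on $(0,1)$ (which keeps $\gamma$ off the diagonal in the interior), and on the requirement that $\gamma(t)\ne(0,0),(1,1)$ for $t\in(0,1)$; both conditions are immediate from the hypothesis $1>\pi_1\circ\gamma(t)>\pi_2\circ\gamma(t)$ of Corollary \ref{befo}.

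Given $\gamma$ as in the corollary, I would then run the three steps of the proof of Proposition \ref{conc}. Step 1 produces piecewise-linear $f_k\in\tilde{\mathcal{U}}$ approximating $f=\pi_2\circ\gamma$ uniformly; the interpolation construction is insensitive to whether $f$ takes values outside $[0,1]$. Step 2 applies the extended Proposition \ref{points} to $\gamma_k=(g,f_k)$; since $\gamma([0,1])$ is compact and $\gamma_k\to\gamma$ uniformly, the approximating points $A_{i,k}$ lie in a compact subset of $\mathbb{R}^2$, so a convergent subsequence can be extracted, giving limit points $A_i$ on $\gamma$ satisfying \eqref{relations}. Step 3 yields distinctness via the same chain $\pi_1(A_i)-\pi_1(A_{i-1})\ge\pi_1(A_i)-\pi_2(A_i)\ge 0$, with the equality cases ruled out by $\pi_1>\pi_2$ on $(0,1)$ together with the boundary analysis already present in Proposition \ref{conc}.

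I expect the main difficulty to be the careful bookkeeping required to confirm that each intermediate statement really is insensitive to replacing $[0,1]$ by $\mathbb{R}$ as the codomain; there is no new geometric idea involved, only verification that every use of the bound $0\le \pi_i\circ\gamma\le 1$ in the preceding proofs can be replaced by the weaker compactness of $\gamma([0,1])$ and the strict interior inequalities.
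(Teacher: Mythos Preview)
Your route differs from the paper's, and the distinctness portion has a real gap. You claim that the distinctness argument in Proposition~\ref{points} ``depends only on the strict inequality $\pi_1\circ\gamma(t)>\pi_2\circ\gamma(t)$ on $(0,1)$'' together with $\gamma(t)\notin\{(0,0),(1,1)\}$. But the chain you invoke in Step~3, namely $\pi_1(A_i)-\pi_1(A_{i-1})\ge\pi_1(A_i)-\pi_2(A_i)$, is obtained from the identity~\eqref{difference},
\[
\pi_1(A_i)-\pi_1(A_{i-1})=\bigl[\pi_1(A_i)-\pi_2(A_i)\bigr]+y(t_0),
\]
and the inequality follows only because $y(t_0)\ge 0$. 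Since $y(t_0)=\pi_2(A_1)$, this is automatic when the curve sits in $[0,1]^2$, but once $\pi_2\circ\gamma$ is allowed to be negative (which the hypotheses of the corollary permit), your piecewise-linear approximations $\gamma_k=(g,f_k)$ may well have $\pi_2(A_{1,k})<0$, and the monotonicity of $\pi_1(A_{i,k})$ no longer follows. In the limit one can recover $\pi_2(A_1)=1-\pi_1(A_{n+1})\ge 0$ from the cyclic relation and the hypothesis $\pi_1<1$, so your plan is salvageable; but that step is missing, and it is exactly where the new hypothesis $\pi_1<1$ must be used.

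The paper avoids re-proving the auxiliary results altogether. If $\pi_2\circ\gamma>0$ on $(0,1)$ then $\gamma(t)\in\Delta$ and Proposition~\ref{conc} applies directly. Otherwise it takes the last time $T$ with $\pi_2\circ\gamma(T)=0$, picks $T_k\downarrow T$, and sets $\gamma_k$ equal to the straight segment from $(0,0)$ to $\gamma(T_k)$ followed by $\gamma|_{[T_k,1]}$. By construction each $\gamma_k$ lands in $\Delta$ on $(0,1)$, so Proposition~\ref{conc} applies verbatim to $\gamma_k$, and a limit argument (using $\pi_1<1$ to rule out collapse at $(1,1)$, then the cyclic relation to force $\pi_2(A_1)>0$) yields distinct limit points on $\gamma$. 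This sidesteps any need to extend Theorems~\ref{mct}--\ref{diff} or Propositions~\ref{func}--\ref{points} to $\mathbb{R}$-valued data.
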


\begin{proof}
Note that if $\pi_2\circ \gamma(t)>0$ for $t\in (0,1)$, then the problem is reduced to the one considered in Proposition \ref{conc}. Thus suppose $\pi_2 \circ \gamma(t)=0$ for some $t$ and choose $T\in (0,1)$ such that $\pi_2 \circ \gamma(T)=0$ and $\pi_2 \circ \gamma(t)>0$ for all $t\in (T,1]$. We define a sequence $\gamma_k$ as follows. Let $T_k \downarrow T$ and define a segment $C_k$ connecting $(0,0)$ to $\gamma(T_k)$. Let $\gamma_k$ be the $C_k$ joined with $\gamma([T_k,1])$. For each $k$, by Proposition \ref{conc}, we find a sequence $A_{ik}$, $i\leq n+2$, satisfying the conditions \eqref{relations} for $\gamma_k$. We derive a subsequence such that $A_{ik} \rightarrow A_i$, where $A_i \in [0,1]^2$. 

Next, we show that the points $A_i$, $i=0,\ldots, n+2$, are distinct and are on $\gamma$. An argument similar to the one given in the proof of Proposition \ref{conc} implies that the sequence $\pi_1(A_i)$ is non-decreasing and there exist $u,v$ such that $A_i=(0,0)$ for $i\leq u$, $A_i=(1,1)$ for $i\geq v$, and $\pi_1(A_i)$ is strictly increasing for $u\leq i\leq v$. If $v<n+2$, then by equations \eqref{relations} we have $\pi_1(\overrightarrow{A_{v-1}A_{v}})=\pi_2 (\overrightarrow {A_vA_{v+1}})=0$, which implies that $\pi_1(A_{v-1})=1$. Since $\pi_1\circ \gamma(t)<1$ for all $t\in (0,1)$, we conclude that $A_{v-1}=(1,1)$ which is a contradiction. It follows that $v=n+2$ and in particular $0<\pi_1(\overrightarrow{A_{n+1}A_{n+2}})=\pi_2(A_1)$, which implies that $A_1 \neq (0,0)$. Hence $u>0$, and so $\pi_1(A_i)$ is increasing for $0\leq i \leq n+2$. This also implies that $A_i \in \gamma$ for all $i$, since $A_i \in \gamma \cup [0,T] \times \{0\}$ and, as we saw above, $\pi_1(A_i)>0$ for all $i>0$. This completes the proof of Corollary \ref{befo}.
\end{proof}

Now, we are ready to present the proof of Theorem \ref{main}.\\

\noindent \textbf{Proof of Theorem \ref{main}.}  Let us define
\begin{equation}T=\sup \{t\in (0,1): \pi_1 \circ \gamma(t)=\pi_2 \circ \gamma(t)\}~.\end{equation}
If $T=1$, then there exists a sequence $A_i$ of points on $\gamma$ such that the sequences \eqref{seq1} and \eqref{seq2} are exactly the same (no rearrangement is needed in this case). 

Thus, suppose $T<1$, and consider the curve:
\begin{equation}\eta(t)={{\gamma(t(1-T)+T)-\gamma(T)} \over {|\gamma(1)-\gamma(T)|}}~,~t\in [0,1]~.\end{equation}

The curve $\eta:[0,1] \rightarrow [0,1]^2$ satisfies $\eta(0)=(0,0)$, $\eta(1)=(1,1)$, and $\pi_1\circ \eta(t) \neq \pi_2 \circ \eta(t)$ for all $t\in (0,1)$. First assume that $\pi_1 \circ \eta(t)>\pi_2 \circ \eta(t)$ for all $t\in (0,1)$. By Corollary \ref{befo}, the conclusion of Theorem \ref{main} holds for $\eta$ and a sequence $s_i\in [0,1]$, $i=1,\ldots, n$. By taking $A_0=(0,0)$ and $A_{i+1}=\gamma(s_i(1-T)+T)$ for $i=0,\ldots, n$, we obtain a sequence $A_i$ for which the sequences \eqref{seq1} and \eqref{seq2} are positive and are the same up to a rearrangement. 

Finally, if $\pi_1 \circ \gamma(t)< \pi_2 \circ \eta(t)$ for all $t\in (0,1)$, then one considers the curve
\begin{equation}\bar \eta(t)=( \pi_2 \circ \eta(t), \pi_1 \circ \eta(t))~\end{equation}
and apply the same argument above. \hfill $\square$

The following is a direct conclusion of Theorem \ref{main}.

\begin{cor}
Let $f,g: [0,1] \rightarrow [0,1]$ such that $\int_0^1 f(t)dt=\int_0^1 g(t)dt=1$. Moreover, suppose that $0<\int_0^a f(t)dt, \int_0^a g(t)dt<1$ for all $a\in (0,1)$. Then for any $n \geq 1$, there exists a sequence $t_i \in [0,1]$, $0\leq i\leq n+1$, with $t_0=0$ and $t_{n+1}=1$ so that the sequences
\begin{equation}\int_{t_i}^{t_{i+1}}f(t)dt>0~,~i=0,\ldots, n~,\end{equation}
and
\begin{equation}\int_{t_i}^{t_{i+1}}g(t)dt>0~,~i=0,\ldots, n~,\end{equation}
are the same after a rearrangement. 
\end{cor}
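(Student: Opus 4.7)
The plan is to reduce the statement directly to Theorem \ref{main} by packaging the two antiderivatives as a plane curve. Specifically, I would define
\begin{equation*}
F(t)=\int_0^t f(s)\,ds,\qquad G(t)=\int_0^t g(s)\,ds,\qquad \gamma(t)=(F(t),G(t)).
\end{equation*}
Both $F$ and $G$ are continuous (as integrals of bounded integrable functions), non-decreasing, and satisfy $F(0)=G(0)=0$, $F(1)=G(1)=1$. The hypothesis that $\int_0^a f$ and $\int_0^a g$ lie strictly between $0$ and $1$ for $a\in(0,1)$ translates exactly into $\gamma(t)\in(0,1)^2$ for $t\in(0,1)$. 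So $\gamma$ is a continuous curve of the kind covered by Theorem \ref{main}.

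Next I would apply Theorem \ref{main} to $\gamma$ with the given $n$: there exist points $A_0=(0,0), A_1,\ldots,A_n, A_{n+1}=(1,1)$ lying on $\gamma$ such that the sequences $\pi_1(\overrightarrow{A_iA_{i+1}})$ and $\pi_2(\overrightarrow{A_iA_{i+1}})$ are positive and agree after some rearrangement. Writing $A_i=(F(t_i),G(t_i))$ for some $t_i\in[0,1]$ (with $t_0=0$ and $t_{n+1}=1$), the two coordinate differences become precisely $\int_{t_i}^{t_{i+1}}f$ and $\int_{t_i}^{t_{i+1}}g$. Hence the two sequences in the corollary are the claimed rearrangement of each other.

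The only remaining small point is that the sampling parameters $t_i$ can be chosen so that $0=t_0<t_1<\cdots<t_{n+1}=1$. This follows because $F$ is non-decreasing and $F(t_{i+1})-F(t_i)=\pi_1(\overrightarrow{A_iA_{i+1}})>0$: any two preimages of $A_i$ share the common $F$-value, so for any allowable choice of representatives one has $F(t_i)<F(t_{i+1})$, which together with monotonicity of $F$ forces $t_i<t_{i+1}$.

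I do not expect any real obstacle here; the corollary is essentially a dictionary translation between partitions of $[0,1]$ and increasing sequences of points on the curve $\gamma$. The only mild care needed is the regularity required to make $\gamma$ continuous and the monotonicity argument for the $t_i$'s, both of which are handled above. If one wished to avoid any measurability assumption on $f,g$, one could state the corollary for $f,g$ merely integrable with non-degenerate antiderivatives, since continuity of $F$ and $G$ is all that the application of Theorem \ref{main} requires.
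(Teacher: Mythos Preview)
Your reduction is exactly the intended one: the paper states this corollary as a direct consequence of Theorem \ref{main} without giving a separate proof, and your construction $\gamma(t)=(\int_0^t f,\int_0^t g)$ together with the monotonicity argument for ordering the $t_i$ is precisely the translation that makes it so. There is nothing to add.
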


It is worth mentioning that Theorem \ref{main} fails if one does not impose the condition $\gamma(t) \in (0,1)^2 $ for $t\in (0,1)$. For example let $\gamma(t)=(2t,0)$ for $t\in [0,1/2]$ and $\gamma(t)=(1,2t-1)$ for $t\in [1/2,1]$. Then the conclusion of Theorem \ref{main} does not hold for any $n \in \mathbb{N}$. However, based on Theorem \ref{main}, we have the following conjecture.
\\

\noindent \textbf{Conjecture 2}. Let $\gamma: [0,1] \rightarrow \mathbb{R}^2$ be a continuous curve such that $\gamma(0)=(0,0)$ and $\gamma(1)=(1,1)$. Then for any $n \in \mathbb{N}$, there exists a sequence $A_i$, $0\leq i\leq n+1$, of distinct points on $\gamma$ such that $A_0=(0,0)$, $A_{n+1}=(1,1)$, and the sequences 
\begin{equation}\label{seqa}
\pi_1 (\overrightarrow{A_iA_{i+1}})~,~i=0,\ldots, n~,\end{equation}
and 
\begin{equation}\label{seqb}
\pi_2 (\overrightarrow{A_iA_{i+1}})~,~i=0,\ldots, n~,\end{equation}
are the same after a rearrangement.

\bibliographystyle{amsplain}

\end{document}